\newtheorem{pro}{Proposition}[section]
\newtheorem{teo}[pro]{Theorem}
\newtheorem{defi}[pro]{Definition}
\newtheorem{lem}[pro]{Lemma}
\newtheorem{cor}[pro]{Corollary}
\newtheorem{rk}[pro]{Remark}
\newtheorem{facts}[pro]{Facts}
\newcommand{\Ext}{\mathrm{Ext}}
\newcommand{\Tor}{\mathrm{Tor}}
\newcommand{\Hom}{\mathrm{Hom}}
\newcommand{\A}{\mathcal{A}}
\newcommand{\B}{\mathcal{B}}
\newcommand{\F}{\mathcal{F}}
\newcommand{\Le}{\mathcal{L}}
\newcommand{\X}{\mathcal{X}}
\newcommand{\Y}{\mathcal{Y}}
\newcommand{\pd}{\mathrm{pd}}
\newcommand{\op}{\mathrm{op}}
\newcommand{\Gpd}{\mathrm{Gpd}}
\newcommand{\Proj}{\mathcal{P}}
\newcommand{\Inj}{\mathcal{I}}
\newcommand{\id}{\mathrm{id}}
\newcommand{\resdim}{\mathrm{resdim}}
\newcommand{\coresdim}{\mathrm{coresdim}}
\newcommand{\Modu}{\mathrm{Mod}}
\newcommand{\Ker}{\mathrm{Ker}}
\newcommand{\GP}{\mathcal{GP}}
\newcommand{\DP}{\mathcal{DP}}
\newcommand{\GF}{\mathcal{GF}}
\newcommand{\Flat}{\mathcal{F}}
\newcommand{\GI}{\mathcal{GI}}
\newcommand{\Coker}{\mathrm{CoKer}}
\newcommand{\Ch}{\mathrm{Ch}}
\newcommand{\ortogonal}{\bot}
\newcommand{\gorro}{\wedge}
\begin{document}
\title[Some remarks on Gorenstein projective precovers]{Some remarks on Gorenstein projective precovers}

\author{V\'ictor Becerril}
\address[V. Becerril]{Centro de Ciencias Matem\'aticas. Universidad Nacional Aut\'onoma de M\'exico. 
 CP58089. Morelia, Michoac\'an, M\'EXICO}
\email{victorbecerril@matmor.unam.mx}
\thanks{2010 {\it{Mathematics Subject Classification}}. Primary 18G10, 18G20, 18G25. Secondary 16E10.}
\thanks{Key Words: Gorenstein projective, Contravariantly finite, Hovey  triple,  Auslander class, Duality pair}
\begin{abstract} 
The existence of the Gorenstein projective precovers over arbitrary rings is an open question. In this paper, we make use of three different techniques addressing intrinsic and homological properties  of several classes of relative Gorenstein projective $R$-modules, among them including the Gorenstein projectives and Ding projectives, with the purpose of giving some situations where Gorenstein projective precovers exists. Within the development of such techniques we obtaint a family of hereditary and complete cotorsion pairs and hereditary Hovey  triples that comes from relative Gorenstein projective $R$-modules.  We also study a class  of Gorenstein projective $R$-modules relative to the Auslander class $\A _{C} (R)$ of a semidualizing $(R,S)$-bimodule $_R C_S$, where we  make use of a property of ``reduction".  
\end{abstract}  
\maketitle

\section{Introduction} 
  For $R$ an associative ring  with identity, the Gorenstein projective, injective and flat $R$-modules where introduced in \cite{EnJen93}, since then the Gorenstein homological algebra has been developing intensively as a relative version of homological algebra that replaces the classical projective, injective and flat modules and resolutions with the Gorenstein versions. While in the classical homological algebra is know the existence of projective resolutions (resp. injective and flat resolutions)  for any $R$-module without restrictions over $R$,  the situation is different for the Gorenstein version.  The question: \textit{What is the most general class of rings over which all modules have Gorenstein projective (injective) resolutions?} still open. 
 A comprehensive response has been given by S. Estrada, A. Iacob and K. Yeomans,  who have proved that the class of Gorenstein projective $R$-modules $\GP(R)$ is special precovering on $\Modu (R)$  provided that the ring  $R$ be right coherent and left $n$-perfect \cite[Theorem 2]{EA17}. More recently, a relationship has been found between the existence of Gorenstein projective precovers and finitely presented $R$-modules with the  Second Finitistic Dimension Conjeture \cite[Theorem 5]{Estrada23}. Furthermore, a closer relationship has been presented by P. Moradifar and J.  \v{S}aroch in \cite{Saroch22}  where is proved  that \textit{contravariant finiteness of the class}\footnote{In this paper we use \textit{precovering} as synonym of \textit{contravariantly finite}.} $\GP (R) ^{< \infty} _{fin}$ (finitely generated $R$-modules of finite Gorenstein projective dimension) implies validity of the Second Finitistic Dimension Conjeture over left artinian
rings. Furthermore is proved that that contravariant finiteness of the class $\GP (R) ^{< \infty} _{fin}$ implies contravariant finiteness of the class $\Proj (R) ^{<\infty} _{fin}$, over rings where $\GP(R) _{fin}$ (the class of finitely generated Gorenstein projective $R$-modules) is contravariantly finite and the converse holds for Artin algebras. Such relation is important, since was proved by Auslander and Reiten in \cite{AuRe}  that
contravariant finiteness of the class $\Proj (R) ^{<\infty} _{fin}$, referred to as the Auslander–Reiten condition, is a sufficient condition for validity of the Second Finitistic Dimension Conjeture over an Artin algebra.

   In view of the importance contravariant finiteness of the class $\GP (R)$, we study when such class is  precovering on $\Modu (R)$. Although we know by M. Cortés-Izurdiaga, J. Šaroch \cite{Cortes} that the pair $(\GP(R) , \GP (R)^{\ortogonal})$ is always an hereditary cotorsion pair, a condition to be complete is that all projective modules are $\lambda$-pure-injective for some infinite regular cardinal $\lambda$ (in particular, if R is right $\Sigma$-pure-injective). In this paper we make use of three different ways to obtain $\GP (R)$ precovering. First, we make use of the intrinsic properties of $\GP (R)$ to see when the Holm's question \cite[Remark 4.5 (4)]{Nanquin} is fulfilled. For the second, we make use of the different generalizations of the class $\GP (R)$ analyzing when these classes match. For the third,  we make use of the technique of S. Estrada and A. Iacob \cite{Estrada23}, by finding a suitable complete hereditary cotorsion pair.

\section{Preliminaries}

In what follows, we shall work with categories of modules over an associative ring $R$ with identity. By $\Modu (R)$ and $\Modu(R^{\rm op})$ we denote the categories of left and right $R$-modules. 

Projective, injective and flat $R$-modules will be important to present some definitions, remarks and examples. The classes of projective left and right $R$-modules will be denoted by $\mathcal{P}(R)$ and $\mathcal{P}(R^{\rm op})$, respectively. Similarly, we shall use the notations $\mathcal{I}(R)$, $\mathcal{I}(R^{\rm op})$, $\mathcal{F}(R)$ and $\mathcal{F}(R^{\rm op})$ for the classes of injective and flat modules in $\Modu(R)$ and $\Modu(R^{\rm op})$, respectively. 

Concerning functors defined on modules, $\Ext^i_R(-,-)$ denotes the right $i$-th derived functor of $\Hom_R(-,-)$. If $M \in \Modu(R^{\rm op})$ and $N \in \Modu(R)$, $M \otimes_R N$ denotes the tensor product of $M$ and $N$. Recall the construction of this tensor products defines a bifunctor $- \otimes_R - \colon \Modu(R^{\rm op}) \times \Modu(R) \longrightarrow \Modu(\mathbb{Z})$, where $\Modu(\mathbb{Z})$ is the category of abelian groups. 
\subsection*{Orthogonality}

Let $\mathcal{X} \subseteq \Modu(R)$, $i \geq 1$ be a positive integer and $N \in \Modu(R)$. The expression $\Ext^i_R(\mathcal{X},N) = 0$ means that $\Ext^i_R(X,N) = 0$ for every $X \in \mathcal{X}$. Moreover, $\Ext^i_R(\mathcal{X,Y}) = 0$ if $\Ext^i_R(\mathcal{X},Y) = 0$ for every $Y \in \mathcal{Y}$. The expression $\Ext^i_R(N,\mathcal{Y}) = 0$ has a similar meaning. Moreover, by $\Ext^{\geq 1}_R(M,N) = 0$ we mean that $\Ext^i_R(M,N) = 0$ for every $i \geq 1$. One also has similar meanings for $\Ext^{\geq 1}_R(\mathcal{X},N) = 0$, $\Ext^{\geq 1}_R(N,\mathcal{Y}) = 0$ and $\Ext^{\geq 1}_R(\mathcal{X,Y}) = 0$. We can also replace $\Ext$ by $\Tor$ in order to obtain the notations for $\Tor$-orthogonality. The right orthogonal complements of $\mathcal{X}$ will be denoted by
\begin{align*}
\mathcal{X}^{\perp_i}  & := \{ M \in \Modu(R) {\rm \ : \ } \Ext^i_{R}(\mathcal{X},M) = 0 \}  & & \mbox{ and } &
\mathcal{X}^{\perp}  & := \bigcap_{i \geq 1} \mathcal{X}^{\perp_i}.
\end{align*}
The left orthogonal complements, on the other hand, are defined similarly.

\subsection*{Relative homological dimensions} 

There are homological dimensions defined in terms of extension functors. Let $M \in  \Modu (R)$ and $\mathcal{X}, \mathcal{Y} \subseteq  \Modu(R)$. The \emph{injective dimensions of $M$ and $\mathcal{Y}$ relative to $\mathcal{X}$} are defined by
\begin{align*}
\id_{\mathcal{X}}(M) &  := \inf \{ m \in \mathbb{Z}_{\geq 0} \text{ : } \Ext _R ^{\geq m+1}(\mathcal{X},M) = 0  \} & & \mbox{ and } &
\id_{\mathcal{X}}(\mathcal{Y}) & := \sup \{ \id_{\mathcal{X}}(Y) \text{ : } Y \in \mathcal{Y} \}.
\end{align*}
In the case where $\mathcal{X} =  \Modu(R)$, we write 
\begin{align*}
\id_{ \Modu(R)}(M) & = \id(M) & & \text{and} & \id_{\mathsf{Mod}(R)}(\mathcal{Y}) & = \id(\mathcal{Y})
\end{align*} 
for the (absolute) injective dimensions of $M$ and $\mathcal{Y}$.  Dually we can define the relative  dimensions $\pd _{\X} (M)$,  $\pd _{\X} (\Y)$ and $\pd (M)$, $\pd (\Y)$.

By an \emph{$\mathcal{X}$-resolution of $M$} we mean an exact complex 
\[
\cdots \to X_m \to X_{m-1} \to \cdots \to X_1 \to X_0 \to M \to 0
\]
with $X_k \in \mathcal{X}$ for every $k \in \mathbb{Z}_{\geq 0}$.  If $X_k = 0$ for $k > m$, we say that the previous resolution has \emph{length} $m$. The \emph{resolution dimension relative to $\mathcal{X}$} (or the \emph{$\mathcal{X}$-resolution dimension}) of $M$ is defined as the value
\[
\resdim_{\mathcal{X}}(M) := \min \{ m \in \mathbb{Z}_{\geq 0} \ \mbox{ : } \ \text{there exists an $\mathcal{X}$-resolution of $M$ of length $m$} \}.
\]
Moreover, if $\mathcal{Y} \subseteq  \Modu (R)$ then
\[
\resdim_{\mathcal{X}}(\mathcal{Y}) := \sup \{ \resdim_{\mathcal{X}}(Y) \ \mbox{ : } \ \text{$Y \in \mathcal{Y}$} \}
\]
defines the \emph{resolution dimension of $\mathcal{Y}$ relative to $\mathcal{X}$}. The classes of objects with bounded (by some $n \geq 0$) and finite $\mathcal{X}$-resolution dimensions will be denoted by
\begin{align*}
\mathcal{X}^\wedge_n & := \{ M \in  \Modu(R) \text{ : } \resdim_{\mathcal{X}}(M) \leq n \} & \text{and} & & \mathcal{X}^\wedge & := \bigcup_{n \geq 0} \mathcal{X}^\wedge_n.
\end{align*}
Dually, we can define \emph{$\mathcal{X}$-coresolutions} and the \emph{coresolution dimension of $M$ and $\mathcal{Y}$ relative to $\mathcal{X}$} (denoted $\coresdim_{\mathcal{X}}(M)$ and $\coresdim_{\mathcal{X}}(\mathcal{Y})$). We also have the dual notations $\mathcal{X}^\vee_n$ and $\mathcal{X}^\vee$ for the classes of $R$-modules with bounded and finite $\mathcal{X}$-coresolution dimension.

\subsection*{Approximations}

Given a class $\mathcal{X}$ of left $R$-modules  and $M \in \Modu (R)$, recall that a morphism $\varphi \colon X \to M$ with $X \in \mathcal{X}$ is an \emph{$\mathcal{X}$-precover of $M$} if for every morphism $\varphi' \colon X' \to M$ with $X' \in \mathcal{X}$, there exists a morphism $h \colon X' \to X$ such that $\varphi' = \varphi \circ h$. An $\X$-precover $\varphi $ is special if $\Coker (\varphi) = 0$ and $\Ker (\varphi) \in \X^{\ortogonal _1}$.

 A class $\mathcal{X} \subseteq \Modu(R)$ is (\emph{pre})\emph{covering} if every left $R$-module has an $\mathcal{X}$-(pre)cover. Dually, one has the notions of (\emph{pre})\emph{envelopes} and (\emph{pre})\emph{enveloping} and \emph{special (pre)enveloping} classes. 

\subsection*{Cotorsion pairs}

Two classes $\mathcal{X,Y} \subseteq \Modu (R)$ of left $R$-modules for a \emph{cotorsion pair} $(\mathcal{X,Y})$ if $\mathcal{X} = {}^{\perp_1}\mathcal{Y}$ and $\mathcal{Y} = \mathcal{X}^{\perp_1}$. 

A cotorsion pair $(\mathcal{X,Y})$ is:
\begin{itemize}
\item \emph{Complete} if $\mathcal{X}$ is special precovering, that is, for every $M \in \Modu(R)$ there is a short exact sequence $0 \to Y \to X \to M \to 0$ with $X \in \mathcal{X}$ and $Y \in \mathcal{Y}$; or equivalently, if $\mathcal{Y}$ is special preenveloping. 

\item \emph{Hereditary} if $\Ext^{\geq 1}_R(\mathcal{X,Y}) = 0$; or equivalently, if $\mathcal{X}$ is resolving (meaning that $\mathcal{X}$ is closed under extensions and kernels of epimorphisms, and contains the projective left $R$-modules) or $\mathcal{Y}$ is coresolving.  
\end{itemize}
Note that if $(\mathcal{X,Y})$ is a hereditary cotorsion pair, then $\mathcal{X} = {}^{\perp}\mathcal{Y}$ and $\mathcal{Y} = \mathcal{X}^\perp$.

\subsection*{Duality pairs}

The notion of duality pair was introduced by Holm and J{\o}rgensen in \cite{HJ09}. Two classes $\mathcal{L} \subseteq \Modu(R)$ and $\mathcal{A} \subseteq \Modu (R^{\rm op})$ of left and right $R$-modules form a \emph{duality pair} $(\mathcal{L,A})$ \emph{in $\Modu (R)$} if:
\begin{enumerate}
\item $L \in \mathcal{L}$ if, and only if, $L^+ := \Hom _{\mathbb{Z}} (L, \mathbb{Q} / \mathbb{Z}) \in \mathcal{A}$. 

\item $\mathcal{A}$ is closed under direct summands and finite direct sums. 
\end{enumerate}
One has a similar notion of duality pair in the case where $\mathcal{L}$ is a class of right $R$-modules, and $\mathcal{A}$ is a class of left $R$-modules. 

A duality pair $(\mathcal{L,A})$ is called:
\begin{itemize}
\item \emph{(co)product-closed} if $\mathcal{L}$ is closed under (co)products. 

\item \emph{perfect} if it is coproduct closed, $\mathcal{L}$ is closed under extensions and contains $R$ (regarded as a left $R$-module). 

\item \emph{complete} if $(\A, \Le)$ is also a duality pair and $(\Le, \A)$ has all the properties required to be a perfect duality pair.
\end{itemize}



\section{Gorenstein projective precovers}
We recall that the class of Gorenstein projective $R$-modules $\GP (R)$ consist of cycles of exact complexes of left projective $R$-modules which remains exact after applying the functor $\Hom _R (-, P)$ for all $P \in \Proj (R)$. Also the class of Gorenstein flat $R$-modules $\GF (R) $ consist of cycles of exact complexes of left flat $R$-modules  which remains exact  after applying the funtor  $I \otimes_R -$, for all $I \in \Inj (R^{\op})$.

From  \v{S}aroch and \v{S}t'ov\'ich\v{e}k's \cite[Remark in p.24-25]{Stov} we  know now that the condition $\GP (R) \subseteq \GF (R)$  is true if and only if $\GP (R) = \Proj \GF (R) $, which, in a way, answers Holm's question  \cite[Remark 4.5 (4)]{Nanquin}. Where the class here denoted $ \Proj \GF (R)$ is the presented in  \cite[\S 4]{Stov} called the class of  \textit{projectively coresolved Gorenstein flat $R$-modules} which consist of cycles of exact complexes of left projective $R$-modules that remains exact after applying the functor $(I \otimes _R -)$ for all $I \in \Inj (R^{\op})$. 
Now,  from \cite[Theorem 4.9]{Stov} the class $\Proj \GF (R)$ is always a special precovering class. We declare this situation as follows. 

\begin{rk} \label{no-perfect}
Let $R$ be a ring such that $\GP (R) \subseteq \GF (R)$, then the class $\GP (R)$ is special precovering. 
\end{rk}

From the previous result we can see that the conditions asked in \cite[Theorem 1]{EA17}  there are more than needed. In what follows we address a variety of conditions that make    $\GP (R)$ a class special precovering in $\Modu (R)$, for do this we use other kinds of relative projective Gorenstein modules and other particular notions that have recently appeared in the literature.. 

 
  
  The statements (i)-(iv) in the following Proposition are basically equivalent to Holm's question. Their proof  it follows from \cite[Theorem 2.4, Corollaries 2.5, 2,6]{XinW} and  Remark \ref{no-perfect}. It may be noted that there are other conditions involving the copure dimensions defined by Enochs and Jenda \cite{EnJen93}, which we do not deal with here. 
\begin{pro} \label{OneMain}
Given a ring  $R$ the class $\GP(R)$ is special precovering in each one of the following situations:
\begin{itemize}
\item[(i)] $\Inj (R^{\op}) \subseteq \Flat (R^{\op}) ^{\gorro}$,
\item[(ii)] $\GP (R) \subseteq \Proj  \GF (R) ^{\gorro}$,
\item[(iii)] $\GP (R) \subseteq \GF (R) ^{\gorro}$,
\item[(iv)] There exist an integer $n \geq 0$ such that $\Tor ^{R} _n (I,M)=0$ for any $I \in \Inj (R^{\op})$ and $M \in \GP (R)$,
\item[(v)] If $R$ is either a right coherent ring such that $\Flat (R) \subseteq \Proj (R) ^{\gorro}$ or a ring such that $\Inj (R^{\op}) \subseteq \Flat (R) ^{\gorro}$.
\end{itemize}

\end{pro}

Note that a situation where $R$ is close to being right coherent and left $n$-perfect \footnote{This means that for the ring  $R$ and $n \geq 0$ the containment  $\Flat (R) \subseteq \Proj (R) ^{\gorro} _n$ is given.}  \cite[Theorem 2]{EA17} arises in Proposition \ref{OneMain} (v). In the following result we give a proof with another arguments where also $\GP (R)$ is special precovering.  
The purpose of giving the details is that the arguments can be adapted to other situations, where  some kind of relative Gorenstein projective $R$-modules  match with $\GP (R)$.
\begin{pro}\label{Ding-precov}
Let $R$ be a right coherent ring, then the class of Ding projective $R$-modules $\DP (R)$ is special precovering on $\Modu (R)$. Furthermore, if $\Flat (R) \subseteq \Proj (R) ^{\gorro}$ then $\GP (R)$ is also special precovering on $\Modu (R)$.  
\end{pro}

\begin{proof}
Indeed, since $R$ is right coherent, the pair $(\Flat (R), \Inj (R^{\op}))$ is a symmetric duality pair, thus from \cite[Theorem A.6.]{BGH}\footnote{Note that for D. Bravo et. al.  the notion of duality pair differs from the one presented here.} we get that $\Proj \GF (R) = \DP (R) $. Now from \cite[Theorem 4.9]{Stov} the class $\Proj \GF (R)$ is always a special precovering class, thus $\DP (R)$ is special precovering on $\Modu (R)$. Finally, if  $\Flat (R) \subseteq \Proj (R)^{\gorro}$, we get from  \cite[Proposition 6.7]{Becerril22} that $\DP (R)$ and $\GP (R)$ coincide, therefore $\GP (R)$ is special precovering. 
\end{proof}

As is known, the class of Ding projectives  appears as a generalization of  Gorenstein projectives, while the class $\Proj \GF (R)$ comes to complement $\GP (R)$ since $\Proj \GF (R)$ is a subclass of $\GP (R)$ which consists of  Gorenstein flat $R$-modules, which makes it more versatile,  for example  to show that every ring is GF-closed  \cite{Stov}. Thus, the generalizations and variants of  $\GP (R)$ and $\GF(R)$ provide new information about them \cite{Alina20}. this is partly the reason why we address generalizations of   $\GP (R)$ and $\GF (R)$  in order to provide a better understanding of them. 


 

\subsection{Precovers from generalized Gorenstein $R$-modules}

In what follows, we shall consider classes $\X \subseteq \Modu(R) $ and $ \Y  \subseteq \Modu (R^{\op})$. 
The following definition of Gorenstein flat $R$-modules relative to a pair  $(\X , \Y)$ comes from  \cite[Definition 2.1]{Wang19}.

\begin{defi}\label{def:relativeGF}
An $R$-module $M$ is Gorenstein $(\X , \Y)$-flat if there exists an exact and $(\mathcal{Y} \otimes _R -)$-acyclic complex $X_\bullet \in \Ch(\mathcal{X})$ such that $M \cong Z_0(X_{\bullet})$. By $(\mathcal{Y} \otimes _R -)$-acyclic we mean that $Y \otimes _R X_{\bullet}$ is an exact complex of abelian groups for every $Y \in \mathcal{Y}$. The class of Gorenstein $(\mathcal{X,Y})$-flat $R$-modules will be denoted by $\mathcal{GF}_{(\mathcal{X, Y})}$. 
\end{defi}

There are several types of classes Gorenstein $(\X , \Y)$-flat  that have been presented in recent literature. The class $\Proj \GF$ presentes above have been generalized  and extensively studied by S. Estrada, A. Iacob and M. A. P\'erez in  \cite[Definition 2.6]{Estrada18}. This class is called  \textit{projectively coresolved Gorenstein $\mathcal{B}$-flat} and here denoted $\Proj  \GF _{  \mathcal{B}}$. In such paper also is studied the class $\GF _{ \mathcal{B}}$ (in the notation of Definition \ref{def:relativeGF} is $\GF_{(\Flat, \mathcal{B})}$  where the subscript $\Flat$ denotes $\Flat (R)$) where $\mathcal{B} \subseteq \Modu (R ^{\op})$ is sometimes a \textit{semi-definable class} \cite[Definition 2.7]{Estrada18}.\\

In a near environment,  is defined by J. Gillespie in \cite{Gill19} for a complete duality pair $(\Le, \A)$ the class of \textit{Gorenstein $(\Le, \A)$-flat $R$-modules}, here denoted by $\GF _{ \A}$, and is studied \cite[\S 5]{Gill19} some of their dimensions and model structures. In such paper also is defined the class of \textit{Gorenstein $(\Le, \A)$-projective $R$-modules} here denoted by $\GP _{ \Le}$ and the class of \textit{Gorenstein $(\Le, \A)$-injective $R$-modules} here denoted by $\GI _{ \A}$. We see that when $(\Le, \A)$ is a duality pair the classes $\GP _{\Le}$, $\GI _{ \A, }$, $\GF _{ \A}$ and $\Proj \GF _{ \A}$ have an interesting interaction between them (see \cite{Becerril22}, \cite{Estrada18}, \cite{Gill19}, \cite{Gill21}, \cite{Wang20}). In fact ,the classes $\GP _{ \Le}$ and $\Proj  \GF _{\A}$ agree sometimes. This is  true  for a  symmetric duality pair $(\Le, \A)$,  from \cite[Theorem A.6.]{BGH} since the class $\Le$ is closed by pure quotients \cite[Theorem 3.1]{HJ09}. This  fact had been mentioned previously  by J. Gillespie and A. Iacob in \cite[Corollary 14]{Gill21}. We declare this result as follows. 

\begin{pro} \label{GP=GF}
Given a symmetric duality pair $(\Le, \A)$  in $\Modu (R)$ the class $\Proj \GF _{ \A}$ of projectively coresolved Gorenstein $\A$-flat $R$-modules  coincide with the class $\GP _{\Le}$ of Gorenstein $(\Le, \A)$-projective $R$-modules.
\end{pro}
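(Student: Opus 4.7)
The plan is a double-inclusion argument that uses the Pontryagin character module $(-)^{+} = \Hom _{\mathbb{Z}}(-, \mathbb{Q}/\mathbb{Z})$ as the bridge between the $\Hom$-acyclicity defining $\GP _{\Le}$ and the tensor-acyclicity defining $\Proj \GF _{\A}$. Both classes are described by complete projective resolutions, so the entire proof reduces to showing that an exact complex $P_\bullet \in \Ch(\Proj (R))$ of projectives is $\Hom _R (-, \Le)$-acyclic if and only if it is $(\A \otimes _R -)$-acyclic.

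The direction $\GP _{\Le} \subseteq \Proj \GF _{\A}$ is routine: take $A \in \A$ and invoke symmetry of the pair, i.e.\ the hypothesis that $(\A , \Le)$ is also a duality pair, to deduce $A^{+} \in \Le$. Then $\Hom _R (P_\bullet , A^{+})$ is exact by assumption, and the standard adjunction $\Hom _R (N, A^{+}) \cong (A \otimes _R N)^{+}$ applied degreewise yields exactness of $(A \otimes _R P_\bullet )^{+}$; since $\mathbb{Q}/\mathbb{Z}$ is a faithful cogenerator of abelian groups, $A \otimes _R P_\bullet$ is itself exact, placing the cycles of $P_\bullet$ in $\Proj \GF _{\A}$.

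The reverse containment $\Proj \GF _{\A} \subseteq \GP _{\Le}$ is where the main obstacle lies. For $L \in \Le$ the duality pair yields $L^{+} \in \A$, hence $L^{+} \otimes _R P_\bullet$ is exact, and the identity $\Ext _R ^{i}(Z_k (P_\bullet), L^{++}) \cong \Tor _i ^R (L^{+}, Z_k (P_\bullet))^{+}$ delivers vanishing of $\Ext _R ^{i}$ against the double dual $L^{++}$ but not directly against $L$ itself. Closing this gap is exactly where Holm and J\o{}rgensen's \cite[Theorem 3.1]{HJ09} enters: for a duality pair, $\Le$ is closed under pure quotients, so the pure exact sequence $0 \to L \to L^{++} \to L^{++}/L \to 0$ (arising from the canonical pure embedding $L \hookrightarrow L^{++}$) forces $L^{++}/L \in \Le$, and an iterated dimension-shift then transfers the $\Ext$-vanishing from $L^{++}$ down to $L$. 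This is the mechanism packaged as Bravo-Gillespie-Hovey's \cite[Theorem A.6]{BGH}, whose direct application delivers the second inclusion and completes the proof.
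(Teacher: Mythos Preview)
Your proposal is correct and follows the same route as the paper: the paper does not give a formal proof but simply records (in the paragraph preceding the proposition) that the result follows from \cite[Theorem A.6]{BGH} once one knows that $\Le$ is closed under pure quotients by \cite[Theorem 3.1]{HJ09}, and also points to \cite[Corollary 14]{Gill21}. You invoke exactly these two references, additionally unpacking the easy inclusion $\GP_{\Le}\subseteq \Proj\GF_{\A}$ via the adjunction $\Hom_R(-,A^{+})\cong (A\otimes_R -)^{+}$; note only that your ``iterated dimension-shift'' remark is a heuristic for what \cite[Theorem A.6]{BGH} accomplishes rather than a self-contained argument, which is fine since you explicitly defer to that theorem for the conclusion.
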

 
 When for a symmetric duality pair $(\Le, \A)$ the class $\A$  is semi-definable then  $\GP _{ \Le}$ is special  precovering in the whole category $\Modu (R)$, since \cite[Theorem 2.13]{Estrada18} the class $\Proj \GF _{ \A}$ is the left part of a complete and hereditary cotorsion pair. A duality pair with such property is the definable pair mentioned in \cite[Example 12]{Gill21}, denoted $(  \langle \Flat (R)\rangle, \langle \Inj (R^{\op})\rangle)$. That is, the class of Gorenstein $(\Proj (R), \langle \Flat (R)\rangle )$-projective  $R$-modules is special precovering. We state this as follows. 
 
 \begin{pro}\label{Prop3.3}
 Consider the definable duality pair $(\langle \Flat (R)\rangle, \langle \Inj (R^{\op})\rangle)$ in $\Modu (R)$, then the  class $ \GP _{ \langle \Flat (R)\rangle}$ is the left part of a complete and hereditary cotorsion pair.  
 \end{pro}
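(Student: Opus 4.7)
The strategy is to combine Proposition \ref{GP=GF} with the cotorsion-pair theorem already alluded to in the paragraph preceding the statement, namely \cite[Theorem 2.13]{Estrada18}, and to unpack the meaning of the word \emph{definable} for the pair $(<\Flat (R)>, <\Inj (R^{\op})>)$ just enough to feed the inputs of these two results.

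First, I would spell out that the pair $(<\Flat (R)>, <\Inj (R^{\op})>)$ from \cite[Example 12]{Gill21} is not merely a duality pair but a \emph{symmetric} one: by definability, both orderings $(<\Flat (R)>, <\Inj (R^{\op})>)$ and $(<\Inj (R^{\op})>, <\Flat (R)>)$ are duality pairs in the sense recorded in the Preliminaries. This is precisely the hypothesis needed to apply Proposition \ref{GP=GF}. As an immediate byproduct of definability, the right-hand class $<\Inj (R^{\op})> \subseteq \Modu (R^{\op})$ is semi-definable in the sense of \cite[Definition 2.7]{Estrada18}, which is the input required for \cite[Theorem 2.13]{Estrada18}.

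Next, applying Proposition \ref{GP=GF} to the symmetric duality pair $(<\Flat (R)>, <\Inj (R^{\op})>)$ yields the identification
\[
\GP_{<\Flat (R)>} \ = \ \Proj\GF_{<\Inj (R^{\op})>}
\]
as subclasses of $\Modu (R)$. Finally, since $<\Inj (R^{\op})>$ is semi-definable, \cite[Theorem 2.13]{Estrada18} applies directly and gives that $\Proj\GF_{<\Inj (R^{\op})>}$ is the left-hand side of a hereditary and complete cotorsion pair in $\Modu (R)$. Through the identification above, the same conclusion passes to $\GP_{<\Flat (R)>}$, completing the proof.

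The main point that will require care, and which I regard as the only real obstacle, is step one: making precise that the \emph{definable} character of the pair $(<\Flat (R)>, <\Inj (R^{\op})>)$ in the sense of \cite{Gill21} simultaneously delivers (i) the symmetry hypothesis of Proposition \ref{GP=GF} and (ii) the semi-definability hypothesis of \cite[Theorem 2.13]{Estrada18}. Once these two compatibility conditions have been recorded explicitly, the remainder of the argument is a one-line concatenation of the cited results, so the proof should be short.
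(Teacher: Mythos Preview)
Your proposal is correct and follows essentially the same route as the paper: the paper's argument is the paragraph immediately preceding the statement, which combines \cite[Example 12]{Gill21}, the identification $\GP_{\Le}=\Proj\GF_{\A}$ (Proposition \ref{GP=GF}, implicitly using symmetry), and \cite[Theorem 2.13]{Estrada18}. You are simply more explicit than the paper about why definability supplies both the symmetry needed for Proposition \ref{GP=GF} and the semi-definability of $<\Inj(R^{\op})>$ needed for \cite[Theorem 2.13]{Estrada18}.
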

 
   For each $n \geq 2$ there is a duality pair $( \mathcal{FP}_n\mbox{-}Flat (R),\mathcal{FP}_n\mbox{-}Inj (R^{\op}))$, which consists of  the  $FP_n$-flat left R-modules and $FP_n$-injective right $R$-modules \cite[Theorems 5.5 \& 5.6]{BP}. The right part  $\mathcal{FP}_n\mbox{-}Inj (R^{\op}) $ is definable \cite[Example 2.21. (3)]{Estrada18}.  The   classes of relative Gorenstein $R$-modules that comes from these duality pairs have been studied in  \cite{Alina20} by A. Iacob, proving in a manner similar to that used in Proposition \ref{Prop3.3}  that $\GP _{\mathcal{FP}_n\mbox{-}Flat (R)}$ is the left part of a complete and hereditary cotorsion pair, and thus is a special precovering class.  From here, we declare the following result. 
   
   \begin{pro}\label{Alina}
   If for some $n\geq 2$ the containment $\mathcal{FP}_n\mbox{-}Flat (R) \subseteq \Proj (R) ^{\gorro}$ is given then the class of Gorenstein projective $R$-modules $\GP (R)$ is special precovering. 
   \end{pro}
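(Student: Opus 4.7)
The plan is to reduce to the result cited just before the statement, namely that $\GP_{\mathcal{FP}_n\mbox{-}Flat(R)}$ is the left part of a complete and hereditary cotorsion pair and is therefore special precovering in $\Modu(R)$. I claim that under the hypothesis $\mathcal{FP}_n\mbox{-}Flat(R) \subseteq \Proj(R)^{\gorro}$ one has the equality $\GP(R) = \GP_{\mathcal{FP}_n\mbox{-}Flat(R)}$, from which the precovering property is inherited.

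For the inclusion $\GP_{\mathcal{FP}_n\mbox{-}Flat(R)} \subseteq \GP(R)$, I would use that every projective module is flat and therefore $FP_n$-flat, so $\Proj(R) \subseteq \mathcal{FP}_n\mbox{-}Flat(R)$. Any complex witnessing membership in $\GP_{\mathcal{FP}_n\mbox{-}Flat(R)}$ remains $\Hom_R(-,L)$-exact for every $L$ in the larger class, in particular for every projective $L$, so it witnesses membership in $\GP(R)$.

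For the reverse inclusion, take $M \in \GP(R)$ and a totally acyclic complex $P_{\bullet}$ of projectives with $M \simeq Z_0(P_{\bullet})$. Given $L \in \mathcal{FP}_n\mbox{-}Flat(R)$, by hypothesis $k := \pd(L) < \infty$, and I would show by induction on $k$ that $\Hom_R(P_{\bullet}, L)$ is exact. The base case $k=0$ is the defining property of $\GP(R)$. For $k \geq 1$ choose a short exact sequence $0 \to L' \to Q \to L \to 0$ with $Q$ projective and $\pd(L') \leq k-1$. Since each $P_i$ is projective, applying $\Hom_R(P_i,-)$ preserves exactness and yields a short exact sequence of complexes
\[
0 \to \Hom_R(P_{\bullet}, L') \to \Hom_R(P_{\bullet}, Q) \to \Hom_R(P_{\bullet}, L) \to 0.
\]
The middle complex is exact because $P_{\bullet}$ is totally acyclic, and the left one is exact by the inductive hypothesis; the long exact sequence in homology then forces $\Hom_R(P_{\bullet}, L)$ to be exact, so $M \in \GP_{\mathcal{FP}_n\mbox{-}Flat(R)}$.

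The main point to be cautious about is conceptual rather than technical: one must check that the definition of $\GP_{\Le}$ underlying the cotorsion pair result cited from \cite{Alina20} is precisely the one in terms of $\Hom_R(-,L)$-acyclic complexes of projectives, so that the dimension-shifting argument above applies verbatim. Once this is confirmed, combining the two inclusions with the cited cotorsion pair yields that $\GP(R)$ is special precovering on $\Modu(R)$.
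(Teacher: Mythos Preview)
Your proof is correct and follows essentially the same route as the paper: both reduce to the equality $\GP(R)=\GP_{\mathcal{FP}_n\text{-}Flat(R)}$ and then invoke the complete hereditary cotorsion pair from \cite{Alina20}. The only difference is cosmetic: the paper obtains the equality by citing \cite[Proposition 6.7]{Becerril22} (applied to $\Proj(R)\subseteq \mathcal{FP}_n\text{-}Flat(R)\subseteq \Proj(R)^{\gorro}$), whereas you unpack that citation with the standard dimension-shifting argument, which is exactly what that proposition encodes.
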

   
   \begin{proof}
   Note that we have $\Proj (R) \subseteq \mathcal{FP}_n\mbox{-}Flat (R) \subseteq \Proj (R) ^{\gorro}$, thus from  \cite[Proposition 6.7]{Becerril22} the classes $\GP (R)$ and $\GP _{\mathcal{FP}_n\mbox{-}Flat (R)}$ are the same. Now from \cite[Theorem 3.8]{Alina20} we get that $(\GP (R), \GP (R) ^{\ortogonal})$ is a complete and hereditary cotorsion pair. 
   \end{proof}


Based on the above result we are interested in  the study of the conditions over a duality pair $(\Le, \A)$, that implies that  $\GP _{ \Le}$ will be special precovering.  This conditions will be different to the notion of \textit{semidefinable} for the class $\A \subseteq \Modu (R^{\op})$.  For do this, we study interactions between the classes  of relative Gorenstein flat and relative Gorenstein projective associated to a duality pair $(\Le, \A)$. We begin by recalling some facts and a definition adapted to our setting.  
 
 \begin{defi}[GP-admissible pair] \cite[Definition 3.1]{BMS}. A pair $(\X, \Y) \subseteq \Modu (R) \times \Modu (R)$ is  \textbf{GP-admissible} if satisfies the following conditions:
\begin{enumerate}
\item $\Ext^{\geq 1}_{R}(\mathcal{X,Y}) = 0$.

\item For every  $A \in \Modu (R)$ there is an epimorphism $X \to A$ with $X \in \mathcal{X}$.

\item $\mathcal{X}$ and $\mathcal{Y}$ are closed under finite coproducts.

\item $\mathcal{X}$ is closed under extensions.

\item $\mathcal{X} \cap \mathcal{Y}$ is a relative cogenerator in $\mathcal{X}$, that is, for every  $X \in \mathcal{X}$ there is an exact sequence $0\to X \to  W \to X' \to  0$ with $X' \in \mathcal{X}$ and $W \in \mathcal{X} \cap \mathcal{Y}$. 
\end{enumerate}
\end{defi}

\begin{facts} \label{Rem1}

\begin{itemize}
\item[(i)] We see from \cite[Corollary 2.20]{Estrada18} that for a class of right $R$-modules $\mathcal{B}$ such that  $\Inj (R ^{\op}) \subseteq \mathcal{B} $ which satisfies that  $\GF _{\mathcal{B}}$ is closed under extensions, then  the pair  $(\mathcal{GF}  _{ \mathcal{B}} , \mathcal{GF}  _{ \mathcal{B}} ^{\ortogonal _1})$ is a hereditary and complete cotorsion pair. 
\item[(ii)]  Note that when $(\Le, \A)$ is a perfect duality pair, then the containments  $\Proj (R) \subseteq \Le$, and $\Inj (R^{\op}) \subseteq \A$ are given. Furthermore the pair $(\Proj (R), \Le)$ is GP-admissible (see \cite[\S 3]{Becerril22}). 
\item[(iii)] For  a complete duality pair $(\Le, \A)$ we know from \cite[Corollary 43]{Gill21}  that $(\GF _{ \A} , \GF _{ \A} ^{\ortogonal _1})$  is a perfect cotorsion pair. In consequence the class $\GF _{\A}$ is closed by extensions and by $\mathrm{(i)}$ and $\mathrm{(ii)}$ also is an hereditary cotorsion pair. 
\end{itemize}
\end{facts}

We are ready to prove a useful result, which is a generalization of \cite[Main Result]{EA17}.
\begin{teo} \label{TPrincipal}
Let $(\X, \Y) \subseteq \Modu \times \Modu (R^{\op})$  be  with the following properties.
\begin{itemize}
 \item[(i)] $\Inj (R^{\op}) \subseteq \Y$.
 \item[(ii)] The pair $(\Proj (R), \X)$ is GP-admissible.
 \item[(iii)] The class $\mathcal{GF} _{ \Y}$ is closed by extensions.
 \end{itemize}
 Assume that the inclusions  $ \GP  _{ \X} \subseteq \mathcal{GF} _{ \Y} \subseteq \GP ^{\gorro} _{ \X}$ are given. Then the class $\GP  _{ \X}$ is special precovering.
\end{teo}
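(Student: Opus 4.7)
The plan is to mirror the strategy used in Proposition \ref{ThreeMain}, with $\GP_{\X}$ playing the role of $\Proj\GF(R)$ and $\GF_{\Y}$ playing the role of $\GF(R)$. First, hypotheses (i) and (iii) together let one invoke Remark \ref{Rem1}(i) with $\mathcal{B}=\Y$ to conclude that $(\GF_{\Y},\GF_{\Y}^{\ortogonal_1})$ is a complete and hereditary cotorsion pair in $\Modu(R)$. For any $M\in\Modu(R)$, completeness yields a short exact sequence
$$0\to Y\to N\to M\to 0$$
with $N\in\GF_{\Y}$ and $Y\in\GF_{\Y}^{\ortogonal_1}$. Since $\GP_{\X}\subseteq\GF_{\Y}$, taking right orthogonals reverses the inclusion and gives $Y\in\GP_{\X}^{\ortogonal_1}$.

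Next I would exploit the GP-admissibility of $(\Proj(R),\X)$ in (ii) to show that $(\GP_{\X},\Proj(R))$ is a strong left Frobenius pair in the sense of \cite{BMSP}; this should follow from the same ingredients used in the proof of Proposition \ref{Resolutions}, namely that $\GP_{\X}$ is projective-resolving and closed under direct sums and direct summands. Then \cite[Theorem 2.8]{BMSP}, applied exactly as in Proposition \ref{Resolutions}, provides for every module of finite $\GP_{\X}$-resolution dimension an exact sequence
$$0\to W\to T\to N\to 0$$
with $T\in\GP_{\X}$ and $W$ of finite projective dimension, in particular $W\in\GP_{\X}^{\ortogonal}$. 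The inclusion $\GF_{\Y}\subseteq\GP_{\X}^{\gorro}$ supplied by the hypothesis ensures this applies to the module $N$ obtained above.

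The final step is the pullback chase. Forming the pullback of $T\twoheadrightarrow N$ along $Y\hookrightarrow N$ yields a commutative diagram with short exact column
$$0\to A\to T\to M\to 0$$
whose kernel $A$ fits in $0\to W\to A\to Y\to 0$. Since both $W$ and $Y$ belong to $\GP_{\X}^{\ortogonal_1}$ and the right orthogonal class is closed under extensions, $A\in\GP_{\X}^{\ortogonal_1}$; this exhibits $T\to M$ as the desired special $\GP_{\X}$-precover.

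The main obstacle is the middle step: extracting from the abstract hypothesis (ii) the precise strong-left-Frobenius-pair structure required by \cite[Theorem 2.8]{BMSP}, so that finite $\GP_{\X}$-resolution dimension can be \emph{realized} by a single short exact sequence with kernel in $\GP_{\X}^{\ortogonal}$. Once this is justified, the rest is the standard pullback argument already used in Proposition \ref{ThreeMain}.
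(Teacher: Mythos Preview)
Your proposal is correct and follows essentially the same three-step strategy as the paper: use Remark \ref{Rem1}(i) to get the complete hereditary cotorsion pair $(\GF_{\Y},\GF_{\Y}^{\ortogonal})$ and a first approximation $0\to Y\to N\to M\to 0$; produce a $\GP_{\X}$-approximation of $N$; then splice the two via a pullback. The only place you diverge is in the middle step, and this is exactly the point you flag as ``the main obstacle''. The paper does not pass through the strong left Frobenius pair machinery of \cite{BMSP} at all; instead it invokes \cite[Theorem 4.1(a)]{BMS} directly, a result stated precisely for GP-admissible pairs $(\Proj(R),\X)$, which furnishes for any $N$ with $\resdim_{\GP_{\X}}(N)<\infty$ the short exact sequence $0\to W\to T\to N\to 0$ with $T\in\GP_{\X}$ and $W\in\GP_{\X}^{\ortogonal}$. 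So hypothesis (ii) is consumed by that citation in one line, and your detour through verifying that $(\GP_{\X},\Proj(R))$ is a strong left Frobenius pair is unnecessary. Once that reference is substituted in, your argument and the paper's are identical.
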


\begin{proof}
Take $X \in \Modu (R)$. From Facts \ref{Rem1} (i), we have that $(\GF _{ \Y} , \GF _{ \Y} ^{\ortogonal} )$ is a hereditary and complete cotorsi\'on pair. Thus, there is an exact sequence $0 \to Y \to N \to X \to 0$ with $N \in \GF _{ \Y}$ and $Y \in \GF _{ \Y} ^{\ortogonal }$. From the containment $\GP _{ \X} \subseteq \GF _{ \Y}$, we get  $\GF _{ \Y} ^{\ortogonal} \subseteq \GP _{ \X} ^{\ortogonal }$. Since $\mathcal{GF} _{ \Y} \subseteq \GP ^{\gorro} _{ \X}$ and $(\Proj (R), \X)$ is a GP-admissible pair, then from \cite[Theorem 4.1 (a)]{BMS} for $N \in \mathcal{GF} _{ \Y} $ there is an exact sequence $0 \to W \to T \to N \to 0$ with $T \in \GP _{ \X}$ and $W \in \GP _{ \X} ^{\ortogonal}$. We can construct the following p.b digram

$$\xymatrix{ 
        W \ar@{^{(}->}[d]  \ar@{=}[r]&  W \ar@{^{(}->}[d]  \\
         A \ar@{}[dr] |{\textbf{pb}} \ar@{>>}[d]  \ar@{^{(}->}[r]  & T _{} \ar@{>>}[d] \ar@{>>}[r] & X _{}   \ar@{=}[d]  \\
  Y \ar@{^{(}->}[r]  & N   \ar@{>>}[r] & X }$$

and obtain the exact sequence $ 0 \to W \to A \to Y \to 0$, with $W, Y \in  \GP _{ \X} ^{\ortogonal }$, this implies that $A \in \GP _{ \X} ^{\ortogonal}$. Therefore we obtain the exact sequence $ 0 \to A \to T \to X \to 0$, with $T \in \GP _{ \X}$ and $A \in \GP _{ \X} ^{\ortogonal}$. That is, $T \to X$ is a special $\GP _{ \X}$-precover.
\end{proof}
As application we have the following.
\begin{cor} \label{TwoMain}
Let $R$ be  a  ring such that $\Inj (R^{\op}) \subseteq \Flat (R^{\op}) ^{\gorro}$ and $\Flat (R) \subseteq \Proj (R) ^{\gorro}$. Then,   the class of Ding projective $R$-modules $\DP (R)$ is special precovering.

\end{cor}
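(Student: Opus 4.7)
The plan is to apply Theorem~\ref{TPrincipal} to the pair $(\X, \Y) := (\Flat(R), \Inj(R^{\op}))$, observing first that $\DP(R) = \GP_{\Flat(R)}$ by the very definition of Ding projectivity, and that $(\Flat(R), \Inj(R^{\op}))$ is the classical flat--injective duality pair, which is perfect because $\Flat(R)$ is closed under coproducts and extensions and contains $R$. The three structural hypotheses of Theorem~\ref{TPrincipal} are then essentially immediate: (i) $\Inj(R^{\op}) \subseteq \Y$ is tautological; (ii) $(\Proj(R), \Flat(R))$ is GP-admissible by Remark~\ref{Rem1}(ii); and (iii) $\GF_{\Inj(R^{\op})} = \GF(R)$ is closed under extensions for every ring by \cite{Stov}.

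What remains is the pair of inclusions $\DP(R) \subseteq \GF(R) \subseteq \DP(R)^{\gorro}$. The first is the easy one: $\DP(R) \subseteq \GP(R)$ because Hom-exactness against every flat is a stronger condition than against every projective, and the hypothesis $\Inj(R^{\op}) \subseteq \Flat(R^{\op})^{\gorro}$ together with \cite[Corollary 2.6]{XinW} then yields $\GP(R) \subseteq \GF(R)$, exactly as in the proof of Proposition~\ref{OneMain}. For the second inclusion, Proposition~\ref{GF-n-perf} already gives $\GF(R) \subseteq \GP(R)^{\gorro}_n$, so the whole problem reduces to the identification $\GP(R) = \DP(R)$ when $R$ is left $n$-perfect.

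This identification is the main obstacle, and I would handle it by a dimension shift. Given $G \in \GP(R)$ with complete projective resolution $P_\bullet$ and an arbitrary flat $F$, the $n$-perfect hypothesis provides a projective resolution $0 \to Q_n \to \cdots \to Q_0 \to F \to 0$; breaking it into short exact sequences and iterating the vanishing $\Ext^{\geq 1}_R(G, Q_i) = 0$, which holds because $G$ is Gorenstein projective and each $Q_i$ is projective, propagates to $\Ext^{\geq 1}_R(G, F) = 0$. Applied to every cycle of $P_\bullet$ (each again Gorenstein projective), this forces $\Hom_R(P_\bullet, F)$ to be exact, so $G \in \DP(R)$. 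With $\GP(R) = \DP(R)$ in hand, Theorem~\ref{TPrincipal} delivers the corollary at once.
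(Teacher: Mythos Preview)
Your proof is correct and follows the same overall strategy as the paper: apply Theorem~\ref{TPrincipal} to the pair $(\Flat(R),\Inj(R^{\op}))$, verify the three structural hypotheses, and establish the chain $\DP(R)\subseteq\GF(R)\subseteq\DP(R)^{\gorro}$. The first inclusion is obtained identically in both proofs.

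The difference lies in how the second inclusion $\GF(R)\subseteq\DP(R)^{\gorro}$ is secured. The paper routes through the class $\Proj\GF(R)$: it invokes \cite[Proposition~3.9]{Emmanouil23} for $\GF(R)\subseteq\Proj\GF(R)^{\gorro}_n$ and \cite[Corollary~14]{Gill21} for $\Proj\GF(R)\subseteq\DP(R)$, then concludes $\GF(R)\subseteq\DP(R)^{\gorro}$. You instead combine Proposition~\ref{GF-n-perf} (giving $\GF(R)\subseteq\GP(R)^{\gorro}_n$) with the identification $\GP(R)=\DP(R)$ over a left $n$-perfect ring, which you prove by an elementary dimension shift using only that Gorenstein projectives are $\Ext$-orthogonal to modules of finite projective dimension. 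Your route is more self-contained --- it avoids the two external references and the auxiliary class $\Proj\GF(R)$ entirely --- and it yields the equality $\GP(R)=\DP(R)$ as a byproduct, which the paper does not state here (though it recovers it under coherence in Proposition~\ref{Recover}(b) via \cite{Becerril22}). The paper's route, by contrast, keeps the argument parallel to the proof of Proposition~\ref{ThreeMain} and emphasises the role of $\Proj\GF(R)$ as an intermediary throughout the article.
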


\begin{proof}
 We will verify the conditions of Theorem \ref{TPrincipal} with the pair $(\Flat (R), \Inj (R^{\op}))$. To this end note that $(\Proj (R), \Flat (R))$ is a GP-admissible pair and $\GP _{ \Flat }$ is precisely the class of Ding-projective $R$-modules $\DP (R)$, while $\GF _{ \Inj (R^{\op})}$ is the usual class of Gorenstein flat $R$-modules $\GF (R)$. From \cite[Corollary 2.6]{XinW} we have the containment $\GP(R)  \subseteq \GF (R)$ and always is true that $\DP (R) \subseteq \GP (R)$, therefore $\DP (R) \subseteq \GF (R)$. Also, from \cite[Proposition 3.9]{Emmanouil23}, we have that $\GF (R) \subseteq \Proj  \GF (R) ^{\gorro} $ and by \cite[Corollary 14]{Gill21} the containment $ \Proj \GF (R) \subseteq \DP (R)$ is always true. All this give us $\GF (R) \subseteq \Proj  \GF (R) ^{\gorro} \subseteq \DP (R) ^{\gorro} $. 
\end{proof}

Is know from \cite[Theorem 3.8]{Ding05} and \cite[Corollaries 4.5 and 4.6]{Gill10} that when $R$ is a Ding-Chen ring the class $\DP (R)$ of Ding projective $R$-modules is a class special precovering, the result above and Proposition \ref{Ding-precov}  exhibits other conditions of  when this occurs and shows the usefulness of Theorem \ref{TPrincipal}.

Until now, from Proposition \ref{GP=GF} we know that when $(\Le, \A)$ is a symmetric duality the containment $\GP _{\Le} \subseteq \GF _{ \A}$ is given. By Facts \ref{Rem1}, if $(\Le, \A)$ is perfect, then are fulfilled the conditions $\mathrm{(i),(ii)}$ in the previous Theorem \ref{TPrincipal}. While if $(\Le, \A)$ is  complete then $\mathrm{(iii)}$ in Theorem \ref{TPrincipal} is also true. In the present generality, we declare the following result.

\begin{teo} \label{LA-perf}
Let $(\Le, \A) $ be a complete duality pair in $\Modu (R)$ and $R$ a left n-perfect ring. Then, the class $\GP  _{\Le}$ is special precovering.
\end{teo}

\begin{proof}
Since  $(\Le, \A)$ is complete, also is symmetric and perfect. Thus, the conditions in Theorem \ref{TPrincipal} are fulfilled, except for the containment  $\mathcal{GF} _{ \A} \subseteq \GP ^{\gorro} _{ \Le}$. To prove such a containment we must assume that $R$ is $n$-perfect, that is $\Flat (R)  \subseteq \Proj (R) ^{\gorro}_n$. 

Take $G \in \mathcal{GF} _{\A}$, then there is an exact complex $\mathbf{N}$ of flat $R$-modules $(\A \otimes _R -)$-acyclic with $G = Z_0 (\mathbf{N})$. Consider a partial projective resolution of $\mathbf{N}$  
$$0 \to \mathbf{C} \to  \mathbf{P}_{n-1}  \xrightarrow{ d_{n-1} }  \mathbf{P}_{n-2} \xrightarrow{ d_{n-2} } \cdots \to   \mathbf{P}_{1} \xrightarrow{ d_{1} }  \mathbf{P}_{0}  \xrightarrow{ d_{0} } \mathbf{N} \to 0, $$ 
where  $\mathbf{C}$  is not projective (but will be an exact complex). From this, for each $j$ we have the exact sequence $0 \to C_j \to P_{n-1,j} \to \cdots \to P_{0,j} \to N_j \to 0$, with every $P _{i,j} \in \Proj (R)$, and since $N_j \in \Flat (R)  \subseteq \Proj (R) ^{\gorro}_n$ it follows that $C_j$ is projective for all $j$. Also we have the exact sequence $0 \to \Ker (d_0) \to \mathbf{P}_{0}  \to \mathbf{N} \to 0$, with $\Ker (d_0)$ exact, since $\mathbf{P}_{0}$ and $\mathbf{N}$ are exact. For each $A \in \A$ by  assumption $A \otimes _R \mathbf{N}$ is acyclic, and since $\mathbf{P}_0$ is an projective complex then $A \otimes _R \mathbf{P}_0$ is acyclic, this implies that $A \otimes _R \Ker (d_0)$ is acyclic. Repeating this procedure we obtain that $\mathbf{C}$ is an exact complex and $A \otimes_R \mathbf{C}$ is acyclic for all $A \in \A$. Therefore $ \mathbf{C}$ is an exact complex of projectives $(\A \otimes_R -)$-acyclic, i.e.  $Z_j (\mathbf{C}) \in \Proj \GF_{\A}$.  Note that for each $j$ we have the exact sequence 
$$0 \to Z _j (\mathbf{C}) \to Z_j (\mathbf{P} _{n-1}) \to \cdots Z_{j} (\mathbf{P}_0) \to Z_j (\mathbf{N}) \to 0$$ with $Z_j (\mathbf{P}_i) \in \Proj (R) $ for each $j$ and  where $Z _j (\mathbf{C}) \in \Proj \GF_{\A} = \GP _{\Le}$  by Proposition \ref{GP=GF}.  Thus, for $j = 0$ we get the desired $\mathcal{GP} _{\Le}$-resolution.
\end{proof}

\begin{rk}
Note that, from the  proof of Theorem \ref{LA-perf} we  see that when $(\Le, \A)$ is a symmetric duality pair and $R$ is $n$-perfect, then $\GF _{\A} \subseteq {[ \GP _{\Le}] }^{\gorro} _n$.
\end{rk}

When  $R$ is a right coherent ring  the duality pair  $(\Flat (R), \Inj (R^{\op}))$ es is symmetric, and since $(\Flat, \Flat ^{\ortogonal})$ is always a complete cotorsion pair, it follows that $(\Flat (R), \Inj (R^{\op}))$ is a complete duality pair. Thus, in the case when $R$ is also left $n$-perfect we recover from Theorem \ref{LA-perf}  the main result in \cite{EA17}.


Since  the condition of being special precovering  is enough to obtain a complete and hereditary cotorsion pair \cite[Proposition 23]{Gill21}, we give in the following result a summary in such terms of what we have proved so far.
\begin{pro} \label{manypairs}
Let $R$ be a ring. The following statements are true. 
\begin{itemize}
\item[(i)] If the containment $\GP (R) \subseteq \GF (R)$ is given, then $(\GP (R), \GP (R) ^{\ortogonal})$ is a complete and hereditary cotorsion pair. 
\item[(ii)] For  $R$ a right coherent ring, the pair $(\DP (R), \DP(R) ^{\ortogonal}) $ is a complete and hereditary cotorsion pair in $\Modu (R)$. Furthermore if $\Flat (R) \subseteq \Proj (R) ^{\gorro}$ then $(\GP (R), \GP (R) ^{\ortogonal})$ is also an complete and hereditary cotorsion pair. 
\item[(iii)] If for some $n\geq 2$ the containment $\mathcal{FP}_n\mbox{-}Flat (R) \subseteq \Proj (R) ^{\gorro}$ is given, then $(\GP (R) , \GP (R) ^{\ortogonal})$ is a complete and hereditary cotorsion pair.
\item[(iv)] If $R$ is  left n-perfect and  $\Inj (R^{\op}) \subseteq \Flat (R^{\op}) ^{\gorro}$, then $(\DP (R), \DP ^{\ortogonal} ) = (\GP (R) , \GP (R) ^{\ortogonal})$ is a complete and hereditary cotorsion pair.
\item[(v)] If $(\Le, \A) $ is a complete duality pair in $\Modu (R)$ and $R$ a left n-perfect, then the pair $(\GP _{\Le}, \GP _{\Le} ^{\ortogonal})$ is a complete and hereditary cotorsion pair. 
\end{itemize}
\end{pro}

\begin{proof}
(i) It follows from Remark \ref{no-perfect}, or equivalently by Proposition {One-Main}.
 
(ii) This is Proposition \ref{Ding-precov}. 

(iii) Comes from Proposition \ref{Alina}. 

(iv) This is Corollary \ref{TwoMain}, where the equality is given from \cite[Proposition 6.7]{BMS}.

(v) It follows from Theorem \ref{LA-perf}.
\end{proof}
An interesting phenomena comes from the existence of the cotorsion pair $(\GP _{\Le}, \GP _{\Le} ^{\ortogonal})$ since it is possible to get more cotorsion pairs induced by these. In fact, in each case of the  proposition above, we will obtain a family of cotorsion pairs.
 In order to obtain such cotorsion pairs we need the following lemma. 

\begin{lem} \label{Lema01}
Let $R$ be a ring and $\Le \subseteq \Modu (R)$ such that $(\Proj (R), \Le)$ is a  GP-admissible pair.  For $M \in \Modu (R)$ and $m \in \mathbb{N}$ consider the following statements 
\begin{itemize}
\item[(i)] $M \in [\GP _{\Le} ] ^{\gorro} _{m} $,
\item[(ii)] $\Ext ^1 _R ( M,E) =0$ for all $E \in [\Proj (R)^{\gorro} _m] ^{\ortogonal_1 } \cap \GP _{\Le} ^{\ortogonal _1}$.
\end{itemize}
Then  $\mathrm{(i)}\Rightarrow \mathrm{(ii)}$. If the pair  $(\GP _{\Le} , \GP _{\Le}  ^{\ortogonal _1})$ is a complete cotorsion pair, then $\mathrm{(ii)}\Rightarrow \mathrm{(i)}$, and thus both conditions are equivalent. 
\end{lem} 

\begin{proof}
We will use the following equality $\Proj (R) ^{\gorro} _m = \GP _{\Le} ^{\ortogonal _1 } \cap [ \GP_{\Le}] ^{\gorro} _m$. From \cite[Corollaries  5.2 (b) and 4.3 (c) ]{BMS} we have the containment  $\Proj (R) ^{\gorro} _m \subseteq  \GP _{\Le} ^{\ortogonal _1 } \cap[ \GP_{\Le}] ^{\gorro} _m$. Now if $M \in  \GP _{\Le} ^{\ortogonal _1 } \cap [\GP _{\Le} ] ^{\gorro} _{m}$ then from \cite[Theorem 4.1 (b)]{BMS} there is an exact sequence $\gamma : 0 \to M \to H  \to G \to 0$ with $H \in \Proj (R) ^{\gorro} _{m}$ and $G \in \GP _{\Le}$, this implies that the previous sequence  $\gamma$ splits, since $M \in \GP _{\Le} ^{\ortogonal _1}$, and so $M $ is direct summand of $H \in \Proj(R) ^{\gorro} _m$. 

$\mathrm{(i)}\Rightarrow \mathrm{(ii)}$. By induction over $m$.  If $m =0$ is clear. Now assume that $m > 0 $. From \cite[Theorem 4.1 (a)]{BMS}  there is an exact sequence $ \theta :0 \to K \to G \to M \to 0$ with $K \in \Proj (R) ^{\gorro} _{m-1} $ and $G \in \GP _{\Le}$. By definition there is an exact sequence $0 \to G \to P \to G' \to 0$ with $P \in \Proj (R)$ and $G' \in \GP _{\Le}$. We can construct the following p.o digram
$$\xymatrix{ 
         K  \ar@{=}[d] \ar@{^{(}->}[r]  & G _{} \ar@{^{(}->}[d]  \ar@{}[dr] |{\textbf{po}} \ar@{>>}[r] & M _{}   \ar@{^{(}->}[d]    \\
  K \ar@{^{(}->}[r]  & P   \ar@{>>}[r] & Q }$$
Where $Q \in \Proj (R)^{\gorro} _m$. For $\overline{E}\in [\Proj (R)^{\gorro} _m] ^{\ortogonal _1 }$, applying $\Hom _R (-,\overline{E})$ we obtain the following commutative diagram 
$$\xymatrix{ 
         \Hom _R (M,\overline{E})   \ar@{^{(}->}[r]  & \Hom _R (G,\overline{E}) _{}     \ar[r] & \Hom _R (K,\overline{E})  _{} \ar@{=}[d]      &   \\
  \Hom _R (Q,\overline{E}) \ar@{^{(}->}[r] \ar[u]  & \Hom _R (P,\overline{E})   \ar[r] \ar[u] & \Hom _R (K,\overline{E}) \ar[r] &\Ext^1  _R (Q,\overline{E}) =0, }  $$
  Thus $\Hom _R (G,\overline{E})  \to  \Hom _R (K,\overline{E}) $ is an epimorphism for all $\overline{E}\in [\Proj (R)^{\gorro} _m] ^{\ortogonal _1}$.  Now assume that $E \in [\Proj (R)^{\gorro} _m] ^{\ortogonal _1} \cap \GP _{\Le} ^{\ortogonal _1}$. From the above and $\theta$ we have the exact sequence 
  $$\Hom _R (G,E)  \twoheadrightarrow  \Hom _R (K,E) \to  \Ext ^1  _R (M,E)  \to \Ext ^1 _R (G,E) =0 $$
  Therefore, we conclude  $\Ext _R^1 (M, E) =0$ for all $E \in [\Proj (R)^{\gorro} _m] ^{\ortogonal _1} \cap \GP _{\Le} ^{\ortogonal _1}$.
  
  $\mathrm{(ii)}\Rightarrow \mathrm{(i)}$.  Let us suppose that $(\GP _{\Le}, \GP _{\Le} ^{\ortogonal _1})$ is a complete cotorsion pair and that for $M \in  \Modu (R)$ we have  $\Ext _R^1 (M, E) =0$ for all $E \in [\Proj (R)^{\gorro} _m] ^{\ortogonal _1} \cap \GP _{\Le} ^{\ortogonal _1}$.  We will use the equivalence of  \cite[Corollary 4.3 (c)]{BMS}. For $M \in  \Modu (R)$ there is an exact sequence $0 \to M \to H \to Q \to 0$, with $H \in  \GP _{\Le} ^{\ortogonal _1}$ and $Q \in \GP _{\Le}$. Applying $\Hom _R (-, E)$ we have the exact sequence 
  $$ \Ext _R^1 (Q, E)  \to \Ext _R^1 (H, E) \to  \Ext _R^1 (M, E) =0$$
  where $\Ext _R^1 (Q, E) =0$ since $Q \in \GP _{\Le}$ and $E \in \GP _{\Le} ^{\ortogonal _1}$. Therefore $\Ext _R^1 (H, E) =0$ for all $E \in [\Proj (R)^{\gorro} _m] ^{\ortogonal _1} \cap \GP _{\Le} ^{\ortogonal _1}$, we will use this fact at the end. 
  
  From \cite[Theorem 7.4.6]{EnJen00}, for $H$ there is an exact sequence $0 \to K' \to H' \to H \to 0$, with $H' \in \Proj (R) ^{\gorro}_m$ and $K' \in [\Proj (R) ^{\gorro}_m] ^{\ortogonal _1}$. Note that $H, H' \in \GP _{\Le} ^{\ortogonal_1}$ (since $\Proj (R)  \subseteq \GP _{\Le} ^{\ortogonal _1}$ implies $\Proj (R) ^{\gorro}_m  \subseteq \GP _{\Le} ^{\ortogonal _1}$ by the dual of  \cite[Lemma 2.6]{BMS}). This implies that $K' \in \GP _{\Le} ^{\ortogonal _1}$, i.e. $K' \in [\Proj (R) ^{\gorro}_m] ^{\ortogonal _1} \cap \GP _{\Le} ^{\ortogonal _1} $, so that  $\Ext ^1 _{R} (H, K') =0$. That is $0 \to K' \to H' \to H \to 0$  splits, therefore $H \in \Proj (R) ^{\gorro}_m$. Thus, the exact sequence $0 \to M \to H \to Q \to 0$ fulfils with the conditions of \cite[Corollary 4.3 (c)]{BMS}. 
\end{proof}

With the above, we are ready to get a family of cotorsion pairs. 
\begin{teo} \label{Family}
Let $R$ be a ring and $\Le \subseteq \Modu (R)$ such that $(\Proj (R), \Le)$ is a  GP-admissible pair. If   $(\GP _{\Le} , \GP _{\Le}  ^{\ortogonal _1})$ is a complete cotorsion pair, then for each $m > 0$ the pair 
\[  \displaystyle{\left( [\GP _{\Le}] ^{\gorro} _m ,\;  [\Proj (R)^{\gorro} _m] ^{\ortogonal _1} \cap \GP _{\Le} ^{\ortogonal _1}  \right) ,}
\]
is a complete and hereditary cotorsion pair.
\end{teo}

\begin{proof}
From Lemma \ref{Lema01} we know that  following;
\begin{align*}
[\GP _{\Le}] ^{\gorro} _m & = {^{\ortogonal _1} \displaystyle{\left( [\Proj (R)^{\gorro} _m] ^{\ortogonal _1} \cap \GP _{\Le} ^{\ortogonal _1} \right) }} & \text{and} & &  [\Proj (R)^{\gorro} _m] ^{\ortogonal _1} \cap \GP _{\Le} ^{\ortogonal _1} & \subseteq { (  [\GP _{\Le}] ^{\gorro} _m )^{\ortogonal _1}}.
\end{align*}
For the other hand we have  that $\Proj (R)^{\gorro} _m \cup \GP _{\Le} \subseteq [ \GP _{\Le}] ^{\gorro} _m \cup \GP _{\Le} = [ \GP _{\Le}] ^{\gorro} _m$. Then, taking orthogonals
\[ \displaystyle{ \left(  [ \GP _{\Le}] ^{\gorro} _m     \right) ^{\ortogonal _1}  \subseteq \left(   \Proj (R)^{\gorro} _m \cup \GP _{\Le}  \right) ^{\ortogonal _1 } =  (\Proj (R)^{\gorro} _m )^{\ortogonal _1} \cap \GP _{\Le}  ^{\ortogonal _1} }.
\]
This implies that $  \left(  [ \GP _{\Le}] ^{\gorro} _m     \right) ^{\ortogonal _1} =  (\Proj (R)^{\gorro} _m )^{\ortogonal _1} \cap \GP _{\Le}  ^{\ortogonal _1} $. Since the class $ [\GP _{\Le}] ^{\gorro} _m$ is closed by kernels of epimorphisms \cite[Corollary 4.10]{BMS}, we conclude that such cotorsion pair is hereditary. 
It remains to prove that such a pair is complete. We know that $(\GP _{\Le} , \GP _{\Le}  ^{\ortogonal _1})$ is complete as well as $(\Proj (R) ^{\gorro}_m , [\Proj (R) ^{\gorro} _m] ^{\ortogonal _1})$ \cite[Theorem 7.4.6]{EnJen00}, thus for $M \in \Modu (R)$ there is an exact sequence $0 \to K \to H \to M \to 0 \to 0$ with $H \in \GP _{\Le}$  and $K\in \GP _{\Le} ^{\ortogonal _1} $. And to this $K$ there is  $0 \to K \to S \to E \to 0$ with $S \in [\Proj (R) ^{\gorro} _m ] ^{\ortogonal _1}$ and $E \in \Proj (R) ^{\gorro} _m$. Consider the following p.o. diagram 

$$\xymatrix{ 
         K _{} ^{} \ar@{^{(}->}[d]   \ar@{^{(}->}[r]  \ar@{}[dr] |{\textbf{po}}  & H _{} \ar@{^{(}->}[d]   \ar@{>>}[r] & M _{}      \ar@{=}[d]   \\
  S ^{} \ar@{^{(}->}[r]  \ar@{>>}[d]  & G   \ar@{>>}[r]  \ar@{>>}[d] & M \\
    E  \ar@{=}[r] & E    &  }$$
   Where $S \in \GP _{\Le} ^{\ortogonal _1} $, since $K \in \GP _{\Le} ^{\ortogonal _1} $ and $E \in  \Proj (R) ^{\gorro} _m \subseteq \GP _{\Le} ^{\ortogonal _1}$. That is $S \in [\Proj (R) ^{\gorro} _m ] ^{\ortogonal _1} \cap \GP _{\Le} ^{\ortogonal _1} $. Now, since $H \in \GP _{\Le} \subseteq [\GP _{\Le}] ^{\gorro} _m$ and $E \in \Proj (R) ^{\gorro} _m \subseteq  [\GP _{\Le}] ^{\gorro} _m$ it follows that $G \in  [\GP _{\Le}] ^{\gorro} _m $. Thus the exact sequence that works is $0 \to S \to G \to M \to 0$.
\end{proof}
Thus,  in each of the conditions of Proposition \ref{manypairs}, we get a family of cotorsion pairs.  As an expected consequence we also obtain a Hovey triple in $\Modu (R)$.
\begin{cor} \label{CorFam}
Let $R$ be a ring and $\Le \subseteq \Modu (R)$ such that $(\Proj (R), \Le)$ is a  GP-admissible pair. If   $(\GP _{\Le} , \GP _{\Le}  ^{\ortogonal _1})$ is a complete cotorsion pair, then for each $m > 0$ there is a hereditary Hovey triple in $\Modu (R)$ given by 
\[  \displaystyle{\left( [\GP _{\Le}] ^{\gorro} _m ,\;  \GP _{\Le} ^{\ortogonal _1}, [\Proj (R)^{\gorro} _m] ^{\ortogonal _1}   \right) }
\]
Whose homotopy category is the stable category $[\GP _{\Le}] ^{\gorro} _m \cap  [\Proj (R)^{\gorro} _m] ^{\ortogonal _1} / ( [\Proj (R)^{\gorro} _m] \cap [\Proj (R)^{\gorro} _m] ^{\ortogonal _1})$. 
\end{cor}

\begin{proof}
We will apply \cite[Theorem 1.2]{Gill15}. Note that the hereditary and complete cotorsion pairs $([\GP _{\Le}] ^{\gorro} _m ,\;  [\Proj (R)^{\gorro} _m] ^{\ortogonal _1} \cap \GP _{\Le} ^{\ortogonal _1} )$ and $(\Proj (R)^{\gorro} _m, [\Proj (R)^{\gorro} _m] ^{\ortogonal _1})$ are compatible, since from the proof of Lemma \ref{Lema01} we know that $\Proj (R) ^{\gorro} _m = \GP _{\Le} ^{\ortogonal _1 } \cap [ \GP_{\Le}] ^{\gorro} _m$. While the last assertion follows from \cite[Theorem 6.21]{Stovi14}.
\end{proof}

As stated in the introduction, we are interested in the contravariant finiteness of the class  $\GP (R)^{< \infty} _{fin}$, \footnote{Note that $\GP (R) _{\infty}$ in the notation of \cite{Saroch22}  refers to $\GP (R)^{\gorro} = \GP (R) ^{< \infty}$ in this paper.} since this implies the contravariant finiteness of the class $\Proj (R) ^{< \infty} _{fin}$ (such property implies  the second finitistic dimension conjeture  over an Artin algebra) over rings where $\GP (R) _{fin}$ is contravariantly finite.   We will prove a kind of converse over $\Modu (R)$, in the generality of assuming that $\GP_{\Le}$ is special precovering. We specify these ideas in the following result. But first we need to state a few notions adapted to our setting. 

\begin{defi} \cite[Definition 4.16]{BMS}
The \textbf{finitistic $(\Proj (R), \Le)$-Gorenstein proyective dimension} of $\Modu (R)$ is defined and denoted by 
$$\mathrm{FGPD}_{(\Proj, \Le)} (R) := \sup \{\Gpd_{\Le} (M): M \in \GP _{\Le} ^{\gorro} \}.$$
where $\Gpd_{\Le} (M) := \resdim _{\GP _{\Le}} (M)$.

The \textbf{finitistic projective dimension} of $\Modu (R)$ is $\mathrm{FPD} (R) : = \sup\{\pd (M): M \in \Proj (R) ^{\gorro}\}$.
\end{defi} 
Note that the following result have a version over each case of Proposition \ref{manypairs}.

\begin{pro} \label{FPD-pairs}
Let $R$ be a ring and $\Le \subseteq \Modu (R)$ such that $(\Proj (R), \Le)$ is a  GP-admissible pair. If   $(\GP _{\Le} , \GP _{\Le}  ^{\ortogonal _1})$ is a complete cotorsion pair and $\mathrm{FPD} (R) = t < \infty$ then $\GP _{\Le} ^{< \infty}$ is the left hand side of a complete and hereditary cotorsion pair. 
\end{pro}

\begin{proof}
Indeed, since $\mathrm{FPD} (R) = t$, from \cite[Theorem 4.23 (a)]{BMS} we get that $\mathrm{FGPD}_{(\Proj, \Le)} (R) = t < \infty$. This implies from Theorem \ref{Family} that $[\GP _{\Le}] ^{\gorro} _{t} = \GP _{\Le} ^{< \infty} $ is the left hand side of a complete and hereditary cotorsion pair. 
\end{proof}
In the following, we compare the results obtained in this work with the second main result of P. Moradifar and J.  \v{S}aroch  \cite[(2.5) Theorem]{Saroch22}. 
\begin{rk}
Let us consider $\Lambda$ an Artin algebra. We know from \cite[X, Theorem 2.4 (iv)]{Bel-Rei} that   $\GP (\Lambda)$ is the left hand side of a  complete and hereditary cotorsion pair in $\Modu (\Lambda)$, so $\GP (\Lambda)$  is contravariantly finite in  $\Modu (\Lambda)$. The following statements are true. 
\begin{itemize}
\item[(i)]  For all  $n \geq 0 $, the class $\GP (\Lambda) ^{\gorro} _{n}$ es contravariantly finite (compare with \cite[(2.5) Theorem (i)]{Saroch22}).
\item[(ii)] If the class $\Proj (\Lambda) ^{< \infty} _{fin}$ of $\Lambda$-modules  finitely generated of finite projective dimensiones contravariantly finite, by Huisgen-Zimmermann and Smal\o\;\cite{Zimer-Smalo}, we get that $\mathrm{FPD} (\Lambda) < \infty$. Thus, from Proposition \ref{FPD-pairs} we conclude that  $\GP(\Lambda) ^{\gorro}$ is contravariantly finite (compare with \cite[(2.5) Theorem (ii)]{Saroch22}).
\end{itemize}
\end{rk}


Finally, to obtain a further application of the theory developed so far, we will address a  duality pair from which we will be able to define a interesting class of Gorenstein $R$-modules on which we will apply our theory. \subsection*{Semidualizing bimodule}

\cite{Araya,HolmWhite} Consider $R$ and $S$ fixed associative rings with identities. An $(S, R)$-bimodule $_S C _R$ is called \textbf{semidualizing} if the following conditions are satisfied. 

\begin{itemize}
\item[(a1)] $_SC$ admits a degreewise finite $S$-projective resolution.
\item[(a2)] $C_R$ admits a degreewise finite $R^{\op}$-projective resolution.
\item[(b1)] The homothety map $_S S _S \to \Hom _{R^{\op}}(C,C)$ is an isomorphism.
\item[(b2)] The homothety map  $_R R _R \to \Hom_S (C, C)$ is an isomorphism. 
\item[(c1)] $\Ext _S ^{\geq 1} (C, C) =0$.
\item[(c2)] $\Ext _{R^{\op}} ^{\geq 1} (C,C) =0$.
\end{itemize}
Wakamatsu introduced in \cite{Waka} and studied the named \textbf{generalized tilting modules}, usually called \textbf{Wakamatsu tilting modules}. Note that a bimodule $_SC_R$ is semidualizing if and only if it is Wakamatsu tilting \cite[Corollary 3.2]{Waka2}. 

Associated to a semidualizing bimodule $_S C_R$ we have the Auslander and Bass classes.

(A) The \textbf{Auslander class} $\A _C (R)$ with respect to  $_S C _R$ consists of all modules $M \in \Modu (R)$ satisfying the following conditions:
\begin{itemize}
\item[(A1)] $\Tor ^{R} _{\geq 1} (C, M )= 0 $,
\item[(A2)] $\Ext _S ^{\geq 1} (C, C \otimes _R M) =0$,
\item[(A3)] The natural evaluation  $\mu _M : M \to \Hom _S (C, C\otimes _R M) $ given by $\mu _M(x) (c) = c \otimes x$ for any $x \in M$ and $c \in C$, is an isomorphism in $\Modu (R)$.
\end{itemize} 

(B) The \textbf{Bass class} $\B _C (S)$ with respect to  $_S C _R$ consists of all modules $N \in \Modu (S)$ satisfying the following conditions:
\begin{itemize}
\item[(B1)] $\Ext _S ^{\geq 1} (C, N)= 0 $,
\item[(B2)] $\Tor _{\geq 1} ^{R} (C, \Hom _S (C,N)) =0$,
\item[(B3)] The natural evaluation  $\nu _N : C \otimes _R \Hom _S (C, N) \to N$ given by $\nu _N (c \otimes f) = f(c)$ for any $c \in C$ and $f \in \Hom _S (C,N)$, is an isomorphism in $\Modu (S)$.
\end{itemize}

 The \textbf{Auslander class} $\A _C (S^{\op})$ and the \textbf{Bass class} $\B _C (R^{\op})$ with respect to  $_S C _R$  are defined similarly.

Recently been shown in \cite{Huang} that there are two duality pairs\footnote{Note that in \cite{Huang} the bimodule is $_RC_S$, and in our text is $_S C _R$ as appear in \cite{HolmWhite}}  associated to the bimodule $_S C _R$. Rewritten for the semidualizing bimodule $_SC _R$ (this is;  in \cite{Huang} we change $R$ by $S$) as follows. 

By \cite[Theorem 3.3]{Huang}  in $\Modu (S^{\op})$, the pair
\begin{center} 
$(\A_C (S^{\op}) , \B _C (S))$
\end{center}
 is a symmetric duality pair,  and by \cite[Theorem 3.3]{Huang} in $\Modu (R)$ the pair
 \begin{center} 
 $(\A_C (R) , \B _C (R^{\op}))$ 
\end{center}
is a symmetric duality pair. Furthermore, the pairs  $(\A_C (S^{\op}) , \B _C (S))$ and $(\A_C (R) , \B _C (R^{\op}))$ are perfect duality pairs, respectively by \cite[Corollaries 3.4 \& 3.6]{Huang}. From the above, we have the following result, whose proof follows from  Proposition \ref{LA-perf}. 

\begin{pro} \label{Au-Gor}
Let $_SC _R$ be a semidualizing $(S,R)$-bimodule, and consider the associated Auslander class $\A _C(R)$. If $R$ is left n-perfect then the class   $\GP _{\A _C} (R)$ of Gorenstein $\A _C(R)$-projectives is special precovering. If in addition $\A _C(R) \subseteq \Proj (R)^{\gorro}$, then the class $\GP (R)$ is also  special precovering.
\end{pro}

Finally in (ii) of the following result, we will make use of a different technique that can also be used on other versions of Gorenstein projective $R$-modules, we refer to a kind of  ``reduction", as in \cite{Estrada23}. We will see that as an advantage of this technique, we can dispense the condition of being n-perfect in the above Proposition. For do this we recall the notion of $C$-injective $R$-modules  \cite[Definition 5.1]{HolmWhite} , which is the class $\mathcal{I}_C (R)$ consisting  by $R$-modules of the form $\Hom _S (C, I)$ where $I \in \Inj (S)$.

\begin{teo} \label{Teo-AC}
Let $_SC _R$ be a semidualizing $(S,R)$-bimodule. The class $\GP _{\A _C} (R)$ is special precovering under the following situations.
\begin{itemize}
\item[(i)] When $\A  _C (R) \subseteq \GP _{\A _C} (R)^{\gorro}$. 
\item[(ii)]If $\pd (\mathcal{I}_C (R))\leq m$ and $\id (\A _C (R))  < \infty$.
\end{itemize}
\end{teo}
\begin{proof}
(a) From  \cite[Proposition 3.9]{EnJen-n-per}, we actually know that  $\GP (R) \subseteq \A _C (R)$, in consequence $\GP _{\A _C} (R) \subseteq \A _C (R)$ since  $\Proj (R) \subseteq \A _C (R) $ from \cite[Theorem 6.2]{HolmWhite}. This give us the containment  $\GP _{\A _C} (R) \subseteq \A _C (R) \subseteq \GP _{\A _C} (R)^{\gorro} $, which implies that  all $N \in \A _C (R)$ possesses a special $\GP _{\A _C} (R)$-precover, for the above mentioned, Facts \ref{Rem1} and  \cite[Theorem 4.1 (a)]{BMS}. Also was recently proved that for a general ring the pair $(\A _C (R), \A _C (R) ^{\ortogonal})$ is a perfect cotorsion pair, thus,  by \cite[Theorem 1]{Estrada23} we get that the class $\GP _{\A _C} (R)$ is  special precovering. 

(b) Now let us suppose that $\pd (\mathcal{I}_C (R))\leq m$ and $\id (\A _C (R))  < \infty$, we will prove under this conditions that $\A  _C (R) \subseteq \GP _{\A _C} (R)^{\gorro} _{m}$, and so the result will follow from (a). To do this, take $M \in \A_{C} (R)$. By \cite[Proposition 3.12]{EnJen-n-per}  there exists an exact sequence $$0 \to M \to U^{0} \to U^1 \to \cdots $$ where each $U^{i } \in \mathcal{I}_C (R)$.

 From the inequality  $\pd (\mathcal{I}_C (R))\leq m$ and \cite[Ch. XVII, \S 1 Proposition 1.3]{Cartan} we can construct the following commutative diagram
$$\xymatrix{  
 0\ar[r] & Q_{_{}} \ar@{^{(}->}[d] \ar@{^{(}->}[r] &  P^0 _{m_{}}  \ar@{^{(}->}[d] \ar[r]& P^1 _{m_{}}  \ar@{^{(}->}[d] \ar[r] & \cdots \\
0 \ar[r]& Q_{m-1} \ar[r] \ar[d] & P^0 _{m-1} \ar[d]  \ar[r]&  P^1 _{m-1} \ar[d]  \ar[r] & \cdots  \\
    &  \vdots  \ar[d] & \vdots  \ar[d] & \vdots \ar[d] & \\
 0\ar[r] & Q_0 \ar@{>>}[d] \ar[r] & P^0 _0  \ar@{>>}[d] \ar[r]& P^1 _0  \ar@{>>}[d] \ar[r] & \cdots \\
0 \ar[r]& M \ar@{^{(}->}[r]  & U^0   \ar[r] &  U^1 \ar[r]  & \cdots  }$$
such that  $P_i ^j \in \Proj (R)$ for all $i \in \{0,1, \dots , m \}$ and all $j\geq 0$. With  $Q _i := \Ker (P^0 _i \to P^1 _i)\in \Proj (R)$ for all $i \in \{0 , \dots , m-1\}$ and 
\begin{center}
$\Lambda  : 0 \to Q \to P_m ^0 \to P_m ^1 \to \cdots$\\
 $\Theta : 0 \to Q \to Q_{m-1} \to \cdots \to Q_0 \to M \to 0$
 \end{center}
  exact complexes. Note that the complex $\Lambda$ can be completed on the left with a resolution by projective $R$-modules, and since $\id (\A _C (R) ) < \infty$, by \cite[Lemma 3.6]{Becerril21} such  exact complex is $\Hom _R (- , \A _C (R))$-exact, from where $Q \in \GP _{\A _C} (R)$. But then, from the exact complex $\Theta$ we obtain that $\resdim _{\GP _{\A _C} (R)}(M) \leq m$, that is $M \in \GP _{\A _C} (R) ^{\gorro} _m$. 
\end{proof} 
The previous result allows us to obtain another family of cotorsion pairs  and  a family Hovey triples similarly to the Corollary \ref{CorFam}. By another hand, the following result address the conditions of when the class $\GP _{\A _{C}} (R)$ coincides with the classical $\GP (R)$.

\begin{cor}
Let $_SC _R$ be a semidualizing $(S,R)$-bimodule and assume that  $\A _{C}  (R) \subseteq \Proj (R) ^{\gorro}$. Then, the class $\GP (R)$ is special precovering. 
\end{cor}

\begin{proof}
Indeed, the  condition $\A _{C}  (R) \subseteq \Proj (R) ^{\gorro}$ implies that $ \A _{C}  (R) \subseteq \GP _{\A _C} (R) ^{\gorro}$, since by  \cite[Theorem 6.2]{HolmWhite} $\Proj (R) \subseteq \A _{C}  (R)$. Thus, from Theorem  \ref{Teo-AC} (a), the class $\GP _{\A _C} (R) $ is special precovering. Currently, we also have the containments $\Proj (R) \subseteq \A _{C}  (R) \subseteq \Proj (R) ^{\gorro}$, thus from \cite[Proposition 6.7]{BMS} we obtain the equality $\GP _{\A _C} (R) = \GP  (R)$. 
\end{proof}

\bigskip

\textbf{Acknowledgements} The author want to thank professor Raymundo Bautista (Centro de Ciencias Matemáticas - Universidad Nacional Autónoma de México) for several helpful discussions on the results of this article.\\

\textbf{Funding} The author was fully supported by a CONAHCyT (actually renamed SECIHTI)  posdoctoral fellowship CVU 443002, managed by the Universidad Michoacana de San Nicolas de Hidalgo at the Centro de Ciencias Matemáticas, UNAM.

\subsection*{Declarations}\; \\

\textbf{Ethical Approval}  Not applicable.\\

\textbf{Availability of data and materials} Not applicable.\\

\textbf{Competing interests} The authors declare no competing interests.

\end{document}